\begin{document}
\baselineskip = 16pt

\newcommand \ZZ {{\mathbb Z}}
\newcommand \CC{{\mathbb C}}
\newcommand \NN {{\mathbb N}}
\newcommand \RR {{\mathbb R}}
\newcommand \PR {{\mathbb P}}
\newcommand \AF {{\mathbb A}}
\newcommand \GG {{\mathbb G}}
\newcommand \QQ {{\mathbb Q}}
\newcommand \bcA {{\mathscr A}}
\newcommand \bcC {{\mathscr C}}
\newcommand \bcD {{\mathscr D}}
\newcommand \bcF {{\mathscr F}}
\newcommand \bcG {{\mathscr G}}
\newcommand \bcH {{\mathscr H}}
\newcommand \bcM {{\mathscr M}}
\newcommand \bcJ {{\mathscr J}}
\newcommand \bcL {{\mathscr L}}
\newcommand \bcO {{\mathscr O}}
\newcommand \bcP {{\mathscr P}}
\newcommand \bcQ {{\mathscr Q}}
\newcommand \bcR {{\mathscr R}}
\newcommand \bcS {{\mathscr S}}
\newcommand \bcV {{\mathscr V}}
\newcommand \bcW {{\mathscr W}}
\newcommand \bcX {{\mathscr X}}
\newcommand \bcY {{\mathscr Y}}
\newcommand \bcZ {{\mathscr Z}}
\newcommand \goa {{\mathfrak a}}
\newcommand \gob {{\mathfrak b}}
\newcommand \goc {{\mathfrak c}}
\newcommand \gom {{\mathfrak m}}
\newcommand \gon {{\mathfrak n}}
\newcommand \gop {{\mathfrak p}}
\newcommand \goq {{\mathfrak q}}
\newcommand \goQ {{\mathfrak Q}}
\newcommand \goP {{\mathfrak P}}
\newcommand \goM {{\mathfrak M}}
\newcommand \goN {{\mathfrak N}}
\newcommand \uno {{\mathbbm 1}}
\newcommand \Le {{\mathbbm L}}
\newcommand \Spec {{\rm {Spec}}}
\newcommand \Gr {{\rm {Gr}}}
\newcommand \Pic {{\rm {Pic}}}
\newcommand \Jac {{{J}}}
\newcommand \Alb {{\rm {Alb}}}
\newcommand \Corr {{Corr}}
\newcommand \Chow {{\mathscr C}}
\newcommand \Sym {{\rm {Sym}}}
\newcommand \Prym {{\rm {Prym}}}
\newcommand \cha {{\rm {char}}}
\newcommand \eff {{\rm {eff}}}
\newcommand \tr {{\rm {tr}}}
\newcommand \Tr {{\rm {Tr}}}
\newcommand \pr {{\rm {pr}}}
\newcommand \ev {{\it {ev}}}
\newcommand \cl {{\rm {cl}}}
\newcommand \interior {{\rm {Int}}}
\newcommand \sep {{\rm {sep}}}
\newcommand \td {{\rm {tdeg}}}
\newcommand \alg {{\rm {alg}}}
\newcommand \im {{\rm im}}
\newcommand \gr {{\rm {gr}}}
\newcommand \op {{\rm op}}
\newcommand \Hom {{\rm Hom}}
\newcommand \Hilb {{\rm Hilb}}
\newcommand \Sch {{\mathscr S\! }{\it ch}}
\newcommand \cHilb {{\mathscr H\! }{\it ilb}}
\newcommand \cHom {{\mathscr H\! }{\it om}}
\newcommand \colim {{{\rm colim}\, }} 
\newcommand \End {{\rm {End}}}
\newcommand \coker {{\rm {coker}}}
\newcommand \id {{\rm {id}}}
\newcommand \van {{\rm {van}}}
\newcommand \spc {{\rm {sp}}}
\newcommand \Ob {{\rm Ob}}
\newcommand \Aut {{\rm Aut}}
\newcommand \cor {{\rm {cor}}}
\newcommand \Cor {{\it {Corr}}}
\newcommand \res {{\rm {res}}}
\newcommand \red {{\rm{red}}}
\newcommand \Gal {{\rm {Gal}}}
\newcommand \PGL {{\rm {PGL}}}
\newcommand \Bl {{\rm {Bl}}}
\newcommand \Sing {{\rm {Sing}}}
\newcommand \spn {{\rm {span}}}
\newcommand \Nm {{\rm {Nm}}}
\newcommand \inv {{\rm {inv}}}
\newcommand \codim {{\rm {codim}}}
\newcommand \Div{{\rm{Div}}}
\newcommand \sg {{\Sigma }}
\newcommand \DM {{\sf DM}}
\newcommand \Gm {{{\mathbb G}_{\rm m}}}
\newcommand \tame {\rm {tame }}
\newcommand \znak {{\natural }}
\newcommand \lra {\longrightarrow}
\newcommand \hra {\hookrightarrow}
\newcommand \rra {\rightrightarrows}
\newcommand \ord {{\rm {ord}}}
\newcommand \Rat {{\mathscr Rat}}
\newcommand \rd {{\rm {red}}}
\newcommand \bSpec {{\bf {Spec}}}
\newcommand \Proj {{\rm {Proj}}}
\newcommand \pdiv {{\rm {div}}}
\newcommand \CH {{\it {CH}}}
\newcommand \wt {\widetilde }
\newcommand \ac {\acute }
\newcommand \ch {\check }
\newcommand \ol {\overline }
\newcommand \Th {\Theta}
\newcommand \cAb {{\mathscr A\! }{\it b}}

\newenvironment{pf}{\par\noindent{\em Proof}.}{\hfill\framebox(6,6)
\par\medskip}

\newtheorem{theorem}[subsection]{Theorem}
\newtheorem{conjecture}[subsection]{Conjecture}
\newtheorem{proposition}[subsection]{Proposition}
\newtheorem{lemma}[subsection]{Lemma}
\newtheorem{remark}[subsection]{Remark}
\newtheorem{remarks}[subsection]{Remarks}
\newtheorem{definition}[subsection]{Definition}
\newtheorem{corollary}[subsection]{Corollary}
\newtheorem{example}[subsection]{Example}
\newtheorem{examples}[subsection]{examples}

\title{Monodromy and algebraic cycles}
\author{ Kalyan Banerjee}

\address{Indian Statistical Institute, Bangalore Center, Bangalore 560059}

\email{kalyanb$_{-}$vs@isibang.ac.in}

\footnotetext{Mathematics Classification Number: 14C25, 14C21, 14C30, 14D05, 14D20,
 14D21}
\footnotetext{Keywords: Pushforward homomorphism, K3 surfaces, Monodromy, Chow groups.}
\maketitle

\begin{abstract}
In this text we are going to discuss the relation between monodromy and algebraic cycles.
\end{abstract}
\section{Introduction}
This article is motivated by the following question due to Claire Voisin in \cite{Voisin}. Let $S$ be a smooth projective complex algebraic embedded in a projective space $\PR^N$. Let $H_t$ be a hyperplane in $\PR^N$ such that $S\cap H_t=C_t$ is smooth and irreducible. Let us consider the closed embedding $j_{t}$ from $C_t$ to $S$ and consider the push-forward homomorphism $j_{t*}$ from $A_0(C_t)$ to $A_0(S)$, where $A_0$ denotes the group of algebraically trivial algebraic cycles modulo rational equivalence. Then the natural question is that what is the kernel of $j_{t*}$. In \cite{BG} it was proved that this kernel is a countable union of translates of an abelian subvariety of $J(C_t)$ (as conjectured by Voisin in \cite{Voisin}). Furthermore we can prove that for a very general hyperplane section $C_t$ of $S$, this abelian variety is either trivial or all of $J(C_t)$, in the case when $S$ is a K3 surface. This was done using an elegant technique in \cite{Voi} concerning monodromy of a Lefschetz pencil of hyperplane sections on $S$. In \cite{BG} we have further generalised the monodromy technique present in \cite{Voi} using \'etale fundamental group and \'etale monodromy over an arbitrary ground field which is uncountable and of characteristic zero. Furthermore if $A_0(S)$ is not isomorphic to $Alb(S)$, which is the case if $S$ is K3, then for a very general hyperplane section the kernel of $j_{t*}$ is actually countable.

The aim of this paper is to address the following two questions. One is consider a fibration on a K3 surface (over an uncountable ground field of characteristic zero) which is close to being a Lefschetz pencil. That is a morphism from $S\to C$ such that there exists some Lefschetz pencil $S\to D$ and we have the obvious triangle commutative. Then what can we say about the kernel of the push-forward homomorphism for a very general $t$ in $C$. First we prove that for any $t$ such that $C_t$ is smooth and irreducible curve the kernel of the push-forward is a countable union of translates of an abelian subvariety $A_t$ of $J(C_t)$, where $S_t$ is fiber over $t$ and is a smooth irreducible curve. Then by appealing to the \'etale monodromy argument we get that this abelian variety $A_t$ is either zero or $J(C_t)$ for a very general $t$. This is done by reducing everything to the monodromy of the Lefschetz pencil $S\to D$. Furthermore following techniques in \cite{BG} we get that this kernel is actually countable for a very general $t$ in $C$. So we get the following theorem.

\textit{For a fibration $S\to C$, which is "close to be a Lefschetz pencil", for a very general $t\in C$ we have that the kernel of $j_{t*}$ is countable.}

The second question is to address the understanding variance of Voisin's question in the case of considering the natural homomorphism from $A_0(C_t)\times A_0(C_s)\to A_0(S)$ ($S$ a K3 surface over the field of complex numbers), when we consider a net $S\to \PR^1\times \PR^1$, such that each pencil in this net is a Lefschetz pencil. Then by using the monodromy technique we prove that for a very general $t,s$ we have the abelian subvariety $A_{(t,s)}$ of $J(C_t)\times J(C_s)$ is either zero or $J(C_t)$ or $J(C_s)$ or $J(C_t)\times J(C_s)$. Since $S$ is K3 surface we can rule out the possibility that $A_{(t,s)}$ is equal to $J(C_t)\times J(C_s)$. So we get the following result:

\textit{For a very general $t,s$ in the net, we have that the kernel of $j_{t,s*}$ from $A_0(C_t)\times A_0(C_s)$ to $A_0(S)$ is countable.}

Also we consider the case when we consider a Lefschetz pencil on $S$ a K3 surface and consider $C_t\times C_t$ inside $S\times S$. Then we prove that:

\textit{For a very general $t$ the kernel of $j_{t*}$ from $A_0(C_t\times C_t)$ to $A_0(S\times S)$ is either zero or isogenous to $J(C_t)$ or is $Alb(C_t\times C_t)$.}

The other  one is to understand the Branched covers of a $K3$ surface and curves on that. Let $\wt{S}\to S$ be a branched cover of a K3 surface. Then consider a Lefschetz pencil on $S$, then for a general member $C_t$ of the pencil, $\wt{C_t}$ (the preimage of $C_t$) is smooth and irreducible. Then we address the question of theh kernel of $j_{t*}$ from $A_0(\wt{C_t})$ to $A_0(S)$. We prove the following result.

\textit{The kernel of $j_{t*}$ is a countable union of translates of an abelian subvariety $A_t$ in $J(\wt{C_t})$. For a general $t$ this abelian variety $A_t$ contains an abelian subvariety $B_t$, which is either zero or isogenous to $J(C_t)$. Furthermore $B_t$ is actually zero for a very general $t$.}

The organisation of the text is as follows. In the second section we discuss the behaviour of monodormy for an arbitrary fibration which is close to being a Lefschetz pencil. In the third section we discuss the relation between monodromy and algebraic cycles on various constructions arising from a K3 surface and taking a Lefschetz pencil or a net on it.

{\small \textbf{Acknowledgements:} The author would like to thank the ISF-UGC project hosted by Indian Statistical Institute, Bangalore center.}

\section{Fibration over a smooth projective curve and monodromy}
Let $S$ be an even dimensional smooth projective surface with irregularity zero  and suppose that we have a regular flat projective morphism $f$ from $S$ to a smooth projective curve $C$. Let $t$ be a point on $C$ such that the scheme theoretic fiber over $t$, that is $C_t$ is non-singular. We consider the closed embedding $j_t$ of $C_t$ into $S$. Then we have $j_{t*}$ from $A^1(C_t)$ to $A^{2}(S)$. We want to understand the kernel of this push-forward homomorphism $j_{t*}$. We have $A^1(C_t)$ is regularly isomorphic to the Jacobian variety $J(C_t)$. Then by \cite{BG} proposition $6$ we have the following theorem.

\begin{theorem}
The kernel of the push-forward homomorphism $j_{t*}$ from $J(C_t)$ to $A^2(S)$ is a countable union of translates of an abelian subvariety $A_t$ of $J(C_t)$.
\end{theorem}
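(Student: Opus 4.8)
The plan is to reduce the statement to the known result in \cite{BG}, Proposition 6, which the excerpt explicitly invokes. The strategy is to exhibit the kernel of $j_{t*}$ as a closed (in the analytic or the countable-Zariski topology) subgroup of the abelian variety $J(C_t)$ and then invoke the structure theory of such subgroups. First I would recall that $A^1(C_t) \cong J(C_t)$ via the Abel--Jacobi map, so that $j_{t*}$ becomes a homomorphism of abelian-variety-valued functors $J(C_t) \to A^2(S)$; here $A^2(S)$ need not be representable, but since the irregularity of $S$ is zero, $A^2(S)$ carries the structure of a (possibly infinite-dimensional) group that is governed by a universal regular homomorphism, and one checks that $j_{t*}$ is a regular homomorphism in the sense of Samuel/Murre.

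Next I would argue that $\ker(j_{t*})$ is a countable union of irreducible subvarieties of $J(C_t)$: this is where the Bloch--Srinivas style decomposition-of-the-diagonal argument, or alternatively a Baire-category/spreading-out argument over the algebraically closed base field, enters. Concretely, the kernel is cut out by countably many algebraic conditions (rational equivalence on $S$ is a countable union of closed conditions, indexed by the components of the relevant Chow varieties parametrizing the rational equivalences), so $\ker(j_{t*})$ is a countable union of Zariski-closed subgroup-translates inside $J(C_t)$. Since a subgroup of an abelian variety which is a countable union of translates of closed subvarieties must, by a connectedness and countability argument, contain an abelian subvariety $A_t$ of finite index in the sense that $\ker(j_{t*})$ is a countable union of translates of $A_t$ --- this is precisely the content extracted in \cite{BG}, Proposition 6, and I would cite it directly rather than reprove it.

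The main obstacle, and the step requiring genuine care, is verifying that the hypotheses of \cite{BG}, Proposition 6 are met in the present setting: namely that $f \colon S \to C$ being a \emph{flat projective} morphism from an \emph{even-dimensional} smooth projective $S$ with \emph{irregularity zero} suffices, whereas the cited proposition may have been stated for a Lefschetz pencil or for $S$ a surface. In particular one must check that the smoothness of the fiber $C_t$ is all that is used locally near $t$, and that the higher-dimensional ambient $S$ does not interfere with the argument that produces the abelian subvariety --- the key point being that $j_{t*}$ lands in $A^2(S)$ (codimension-two cycles), for which the relevant representability/regularity statements still hold since $S$ has no irregularity. Once this checking is done, the conclusion is immediate: $\ker(j_{t*})$ is a countable union of translates of an abelian subvariety $A_t \subseteq J(C_t)$, as claimed.
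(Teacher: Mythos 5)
Your proposal is correct and takes essentially the same route as the paper: the paper's entire proof is a direct citation of Proposition 6 in \cite{BG}, and your reduction to that proposition (together with your sketch of why the kernel is a countable union of Zariski-closed translates and hence of translates of an abelian subvariety) is exactly the content being invoked. Your remark about checking that the hypotheses of the cited proposition carry over to this fibration setting is a reasonable precaution, but it does not change the argument.
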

\begin{proof}
See proposition $6$ in \cite{BG}.
\end{proof}
Now consider an embedding of $S$ into a projective space $\PR^N$, consider a line $D$ in $\PR^{N*}$, such that it gives a Lefschetz pencil on $S$. Then since $C$ is a smooth projective curve there is a finite map $\pi$ from $C$ to this line $D$. Suppose that the composition $S\to \PR^1$ is same as $S\to C\to \PR^1$. Then except for the branch locus of $\pi$, we have a finite covering from $U$ to $U'$, where $U,U'$ are open sets of $C,D$ respectively, we get that for a point $t$ in $U$, $C_t$ is isomorphic to $D_{\pi(t)}$, where $D_{\pi(t)}$ denotes the fiber over the point $\pi(t)$ in $D$. Shrinking $U,U'$ further we can assume that $C_t,D_{\pi(t)}$ are smooth. This isomorphism induces an isomorphism of $J(C_t)$ and $J(D_{\pi(t)})$. Call that isomorphism $\eta_t$. Then we have the following commutativity at the level of Chow groups.

$$
  \diagram
   J(C_t)\ar[dd]_-{\eta_t} \ar[rr]^-{j_{t*}} & & A^2(S)\ar[dd]^-{\id} \\ \\
  J(D_{\pi(t)}) \ar[rr]^-{i_{t*}} & & A^2(S)
  \enddiagram
  $$
This gives us that
$$\eta_t(\ker(j_{t*}))=\ker(i_{t*})\;.$$
By the previous theorem we get that $\ker(j_{t*})$ is a countable union of translates of an abelian variety $A_t$, and similarly $\ker(i_{t*})$ is a countable union of translates of an abelian variety $B_t$. So write $$\ker(j_{t*})=\cup_{i\in\NN} x_i+A_t$$
therefore
$$\eta_t(\ker(j_{t*}))=\cup_{i\in \NN}\eta_t(x_i)+\eta_t(A_t)$$
Since $\eta_t(\ker(j_{t*}))$ is $\ker(i_{t*})$ we get that
$$\cup_{j\in\NN}y_j+B_t=\cup_{i\in\NN}\eta_t(x_i)+\eta_t(A_t)\;.$$
Assuming that the ground field $k$ is uncountable we get that any projective variety cannot be written as a countable union of  proper Zariski closed subsets. From this it follows that
$$\eta_t(A_t)=B_t\;.$$
Now by the monodromy argument as present in \cite{BG} we get that $B_t$ is zero or $J(D_{\pi(t)})$ for a very general $t$, this gives us that $\eta_t(A_t)$ is either zero or $J(D_{\pi(t)}$, from which it follows that $A_t$ is zero or $J(C_t)$, since $\eta_t$ is an isomorphism of abelian varieties.

We can also prove that $A_t$ is zero or $J(C_t)$ without using the isomorphism $\eta_t$ and using the very nice monodromy argument and the spread argument present in \cite{BG}. We present it here.

\begin{theorem}
For a very general $t$ in $C$, the abelian variety $A_t$ is either zero or all of $J(C_t)$.
\end{theorem}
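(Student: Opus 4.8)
The plan is to run the monodromy argument directly on the fibration $f\colon S\to C$, using the Lefschetz pencil $S\to D$ only to pin down the monodromy group, rather than transporting $A_t$ across $\eta_t$. First I would pass to a nonempty Zariski open $U\subseteq C$ over which $C_t$ is smooth and irreducible, over which $\pi$ restricts to a finite \'etale cover $U\to U':=\pi(U)\subseteq D$, and over which (by the already-recorded relation $C_t\cong D_{\pi(t)}$ for $t\in U$) the family of curves $\{C_t\}_{t\in U}$ is identified with the pullback along $U\to U'$ of the restricted Lefschetz family $\{D_s\}_{s\in U'}$. Write $g\colon\mathcal C\to U$ for this smooth projective family and $\Jac(\mathcal C/U)\to U$ for its relative Jacobian. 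Then, exactly as in the spread-out form of proposition $6$ of \cite{BG} (the same input used for the preceding theorem), the relative kernel of the pushforward is a countable union of translates of a closed sub-abelian scheme $\mathcal A\hookrightarrow\Jac(\mathcal C/U)$; and because $k$ is uncountable, $\mathcal A_t=A_t$ for every $t$ outside a countable union of proper closed subsets of $U$, i.e. for a very general $t$.

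The second step is monodromy invariance. Since $\mathcal A$ is a sub-abelian scheme of $\Jac(\mathcal C/U)$, the relative first homology $H_1(\mathcal A/U)$ is a sub-local system of $H_1(\Jac(\mathcal C/U)/U)=R_1 g_*\QQ$ over $U$ (in the \'etale-sheaf formulation of \cite{BG} when $k\neq\CC$). Fixing a very general base point $t\in U$, this says that $H_1(A_t,\QQ)\subseteq H_1(C_t,\QQ)$ is stable under the monodromy representation $\rho\colon\pi_1(U,t)\to\mathrm{GL}\bigl(H_1(C_t,\QQ)\bigr)$.

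The main point --- and the step I expect to be the real content --- is to prove that $\rho$ is \emph{irreducible}. By the identification in the first step, $\rho$ is the restriction of the Lefschetz monodromy representation $\rho'\colon\pi_1(U',\pi(t))\to\mathrm{GL}\bigl(H^1(D_{\pi(t)},\QQ)\bigr)$ to the finite-index subgroup $\pi_*\pi_1(U,t)$. The classical monodromy input of \cite{Voi} and \cite{BG} applies to $\rho'$: since $S$ has irregularity zero, weak Lefschetz forces $H^1(S,\QQ)=0$ to inject into $H^1(D_s,\QQ)$, so the fixed part of $\rho'$ vanishes, the vanishing cohomology is all of $H^1(D_s,\QQ)$, and the image of $\rho'$ --- generated by the Picard--Lefschetz transvections --- is Zariski dense in the symplectic group of the cup-product form on $H^1(D_s,\QQ)$. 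Now the elementary fact I need is that a finite-index subgroup of a Zariski-dense subgroup of a connected linear algebraic group is again Zariski dense (its Zariski closure has finite index in, and hence equals, the ambient connected group). Applying this to $\pi_*\pi_1(U,t)$ inside the image of $\rho'$ shows that the image of $\rho$ is Zariski dense in the symplectic group; since the standard representation of a symplectic group is irreducible, so is $\rho$.

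Irreducibility then finishes it: the monodromy-stable subspace $H_1(A_t,\QQ)$ must be $0$ or all of $H_1(C_t,\QQ)$, and since $A_t$ is an abelian subvariety of $J(C_t)$ with $2\dim A_t=\dim_\QQ H_1(A_t,\QQ)$, we get $A_t=0$ or $A_t=J(C_t)$ for a very general $t\in C$. I do not expect surprises in the routine parts --- spreading the family, extracting the sub-abelian scheme $\mathcal A$ and checking $\mathcal A_t=A_t$ for very general $t$, and (when $k\neq\CC$) recasting everything with \'etale $\pi_1$ and the specialization formalism of \cite{BG}; the only genuinely new observation beyond \cite{Voi} and \cite{BG} is that passing to the finite cover $\pi$ does not break irreducibility, which is precisely the Zariski-density remark above.
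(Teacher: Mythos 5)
Your proposal is essentially correct, but it differs from the paper at the decisive step. The paper works at the geometric generic point: it spreads $A_{\bar\eta}\subset J(C_{\bar\eta})$ over a finite cover $C'\to C$, uses $R^1\alpha_*(\ZZ/l^n\ZZ)$, proper base change and inverse limits to make $H^1_{\ac{e}t}(A_{\bar\eta},\QQ_l)\subset H^1_{\ac{e}t}(J(C_{\bar\eta}),\QQ_l)\cong H^1_{\ac{e}t}(D_{\bar\xi},\QQ_l)$ a module over a group mapping with finite index to $\pi_1^{\tame}(U',\bar\xi)$, and then handles the finite-index issue by the Picard--Lefschetz ``divide by $m$'' trick (from $\gamma^m\alpha-\alpha=m\langle\alpha,\delta_\gamma\rangle\delta_\gamma\in H$ deduce $\gamma\alpha\in H$), so that only \emph{irreducibility} of the full tame monodromy (Voisin's Theorem 3.27 / \cite{BG}) is needed. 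You instead prove that the finite-index subgroup $\pi_*\pi_1(U,t)$ already acts irreducibly, by invoking the big monodromy theorem (the image of the Lefschetz monodromy is Zariski dense in the symplectic group of $H^1(D_s,\QQ)$, which here is all vanishing cohomology since the irregularity is zero) together with the remark that a finite-index subgroup of a Zariski-dense subgroup of a connected algebraic group is still Zariski dense. This is a genuinely different resolution of the same difficulty (mere irreducibility of the big group would not pass to a finite-index subgroup, which is exactly why the paper uses the transvection trick), and it is sound provided you cite the density statement properly: it is Kazhdan--Margulis/Deligne (Weil I, 5.10) rather than Voisin's Theorem 3.27, and it uses that the vanishing cycles are nonzero (genus of the fibers positive; the genus-zero case being trivial). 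Two smaller points: your sub-abelian scheme $\mathcal A$ may only exist after a further finite base change $C'\to C$ (as in the paper, since $A_{\bar\eta}$ need only be defined over a finite extension of $k(C)$), but this merely replaces $\pi_1(U,t)$ by another finite-index subgroup and your density argument is insensitive to that; and over a general uncountable field of characteristic zero your topological local-system language must indeed be replaced by the \'etale formalism you allude to, which is precisely the bookkeeping the paper carries out explicitly.
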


\begin{proof}
Let $\bar{\eta}$ be the geometric generic point of $C$. Then the Jacobian of $C_{\bar{\eta}}$ is isomorphic to $C_t$ for a very general $t$. So if we consider the push-forward homomorphism $j_{\bar{\eta}*}$ from $J(C_{\bar{\eta}})$ to $A^2(S_{\bar{\eta}})$ we get that the kernel of this homomorphism is a countable union of translates of an abeliab subvariety $A_{\bar{\eta}}$ of $J(C_{\bar{\eta}})$. Let $L$ be a finitely generated extension of $k(C)$ in $\bar{k(C)}$, such that $A_{\bar{\eta}}$ and $J(C_{\bar{\eta}})$ are defined over $L$. Let $C'$ be a curve such that $k(C')=L$ and we have a finite morphism from $C'$ to $C$. Then over a Zariski open subset $U'$ of $C'$ we spread $A_{\bar\eta}$ and $J(C_{\bar\eta})$, to get abelian schemes $\bcA,\bcJ$ over $U'$. Let $\alpha,\beta$ be the morphisms from $\bcA,\bcJ$ to $U'$. Then consider the constant sheaf $\ZZ/l^n \ZZ$ on $\bcA,\bcJ$. Throwing out some more points from $U'$ we can assume that $\alpha,\beta$ are non-singular, that is the fibers are non-singular, also $\alpha,\beta$ are locally projective therefore they are proper. So the higher direct image sheaves $R^1\alpha_*(\ZZ/l^n\ZZ)_{\bcA}$, $R^1\beta_*(\ZZ/l^n\ZZ)_{\bcJ}$ are locally constant sheaves on $U$. Since there is an equivalence of locally constant sheaves on $U$ and $\pi_1(U,\bar{\eta})$ modules. We get that the stalks of the above sheaves at the point $\bar\eta$ are $\pi_1(U,\bar\eta)$ modules. By the proper base change theorem we have $(R^1\alpha_*(\ZZ/l^n\ZZ)_{\bcA})_{\bar\eta}$ is isomorphic to $H^1_{\acute{e}t}(A_{\bar\eta},\ZZ/l^n\ZZ)$ and $(R^1\beta_*(\ZZ/l^n\ZZ)_{\bcJ})_{\bar\eta}$ is isomorphic to $H^1_{\ac{e}t}(J(C_{\bar\eta}),\ZZ/l^n\ZZ)$ and they are $\pi_1(U,\bar\eta)$ modules. By taking the inverse limit of this cohomologies we get that $H^1_{\ac{e}t}(A_{\bar\eta},\QQ_l)$ and $H^1_{\ac{e}t}(J(C_{\bar\eta}),\QQ_l)$ are $\pi_1(U,\bar\eta)$ modules. Also the natural morphism from $H^1_{\ac{e}t}(A_{\bar\eta},\QQ_l)$ to $H^1_{\ac{e}t}(J(C_{\bar\eta}),\QQ_l)$ is a map of $\pi_1(U,\bar\eta)$ modules as it is induced from the regular morphism $\bcA\to\bcJ$.

Since we have a finite map from $C$ to $D\cong \PR^1$, we have a map from $\overline{k(D)}$ to $\overline{k(C)}$, which gives us a morphism of schemes $\Spec(\overline {k(C)})$ to $\Spec(\overline{k(D)})$.  $\Spec(\overline{k(C)})$ is nothing but $\bar\eta$ and denote $\Spec(\overline{k(D)})$ as $\bar\xi$. Then $\bar\eta$ maps to $\bar\xi$. So consider the following fiber square
$$
  \diagram
   C\times_{D}{\bar\xi}\ar[dd]_-{} \ar[rr]^-{} & & \bar\xi\ar[dd]^-{} \\ \\
  C \ar[rr]^-{i_{t*}} & & D
  \enddiagram
  $$
Since $\bar\eta$ maps to $\bar\xi$ we have that $\bar\eta$ is in $C\times_{D}{\bar\xi}$. Since $k$ is uncountable we can always choose $t$ such that we have that $C_t$ is isomorphic to $C_{\bar\eta}$ and  $C_t\cong D_{\pi(t)}$, and also $D_{\pi(t)}$ is isomorphic to $D_{\bar\xi}$. Therefore we get that $C_{\bar\eta}$ is isomorphic to $D_{\bar\xi}$ as schemes over $\Spec(\QQ)$ but may not be over $\Spec(k)$. Therefore we have that $H^1_{\ac{e}t}(J(C_{\bar\eta}),\QQ_l)$ is isomorphic to $H^1_{\ac{e}t}(J(D_{\bar\xi}),\QQ_l)$. Now $H^1_{\ac{e}t}(J(D_{\bar\xi}),\QQ_l)$ is isomorphic to $H^1_{\ac{e}t}(D_{\bar\xi},\QQ_l)$ and the map is a map of $\pi_1(U',\bar\xi)$ modules, where $U'$ is such that $\pi^{-1}(U')=U$.
Therefore as in section $4$ of \cite{BG} we get that $H^1_{\ac{e}t}(A_{\bar\eta},\QQ_l)$ is included in $H^1_{\ac{e}t}(J(C_{\bar\eta}),\QQ_l)$ which is isomorphic to $H^1_{\ac{e}t}(J(D_{\bar{\xi}}),\QQ_l)$ and that is again isomorphic to $H^1_{\ac{e}t}(D_{\bar\xi},\QQ_l)$ on which we have the irreducibility of the tame fundamental group $\pi_1^{tame}(U',\bar\xi)$ given by the Picard Lefshcetz formula. By the Picard Lefschetz formula and the fact that $H^1_{\ac{e}t}(A_{\bar\eta},\QQ_l)$ is a $\pi_1(U,\bar\eta)$ module whose image is a finite index subgroup in $\pi_1(U',\bar\xi)$, it follows that $H^1_{\ac{e}t}(A_{\bar\eta},\QQ_l)$ is $\pi_1^{\tame}(U',\bar\xi)$ equivariant. Hence we get that $H^1_{\ac{e}t}(A_{\bar\eta},\QQ_l)$ is either zero or $H^1_{\ac{e}t}(J(C_{\bar\eta}),\QQ_l)$. Therefore it follows that $A_{\bar\eta}$ is either zero or $J(C_{\bar\eta})$ by Tate module reasons. Since $J(C_{\bar\eta}),A_{\bar\eta}$ are isomorphic to $J(C_t),A_t$ for a very general $t$, it follows that for a very general $t$, $A_t$ is either zero or $J(C_t)$. Also observe that if $A_{\bar\eta}$ is zero then $A_t$ is zero for all $t$ belonging to the complement of countable union of points in $C$ and if $A_{\bar\eta}$ is $J(C_{\bar\eta})$ then $A_t=J(C_t)$ for all $t$ belonging to the complement of countable union of points in $C$.
\end{proof}

\begin{lemma}
The set $C^{\sharp}$ consisting of all points $t$ such that $A_t=J(C_t)$ is constructible.
\end{lemma}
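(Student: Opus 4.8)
The plan is to recast the condition "$A_t=J(C_t)$" as the vanishing of $j_{t*}$, to encode that by the fibrewise rational triviality of a single relative $0$-cycle on the total space of the family, and then to exhibit $C^{\sharp}$ as contained in a closed subset of the curve $C$ produced by Noetherian stabilisation, which will turn out to be either finite or cofinite.

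First I would reduce to a vanishing statement. From $\ker(j_{t*})=\bigcup_{i}(x_i+A_t)$, and the fact that over the uncountable field $k$ a countable union of translates of a \emph{proper} abelian subvariety is a proper subset of $J(C_t)$, one gets $A_t=J(C_t)$ if and only if $\ker(j_{t*})=J(C_t)$, i.e. $j_{t*}=0$. As $A^1(C_t)=J(C_t)$ is generated by classes $[p]-[q]$ with $p,q\in C_t$ and $j_{t*}([p]-[q])=[p]-[q]$ in $A^2(S)$, this amounts to saying $[p]=[q]$ in $A^2(S)$ for all $p,q\in C_t$.

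Next, shrink $C$ to an open $U$ over which the fibres $C_t$ are smooth and geometrically irreducible (a cofinite condition, hence harmless), let $\bcC\subseteq S\times U$ be the universal curve, and put $B:=\bcC\times_U\bcC\subseteq S\times S\times U$; this is proper over $U$, with irreducible fibre $B_t=C_t\times C_t$ of dimension $2$. On $S\times B$ take the relative $0$-cycle of degree $0$ given by $\mathcal Z:=\Gamma_1-\Gamma_2$, where $\Gamma_1,\Gamma_2\subseteq S\times B$ are the graphs of the two composites $B\to\bcC\to S$, so that $\mathcal Z_b=[p]-[q]$ for $b=(p,q)$; and set $R:=\{b\in B:\mathcal Z_b\sim_{\mathrm{rat}}0\}$, so that $t\in C^{\sharp}$ iff $B_t\subseteq R$. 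The set $R$ is stable under specialisation, and the standard description of rational equivalence by curves and functions presents it as an \emph{increasing} union $R=\bigcup_i R_i$ of constructible loci, $R_i$ being swept out by rational equivalences of complexity at most $i$ (at most $i$ curves of genus and degree at most $i$, functions of degree at most $i$). Put $Z_i:=\overline{R_i}$, an increasing chain of closed subsets of $B$. Since $B_t$ is irreducible and $k$ is uncountable, $B_t\subseteq R$ forces $R_i\cap B_t$ to be dense in $B_t$ for some $i$, whence $B_t\subseteq Z_i$; so $C^{\sharp}\subseteq\bigcup_i T_i$ with $T_i:=\{t\in U:B_t\subseteq Z_i\}$. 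As $Z_i\to U$ is proper and $B_t$ is irreducible of dimension $2$, the inclusion $B_t\subseteq Z_i$ holds iff the fibre of $Z_i\to U$ over $t$ has dimension $\ge 2$, so by upper semicontinuity of fibre dimension and properness each $T_i$ is closed in $U$; the $T_i$ increase, so by Noetherianity $\bigcup_i T_i=T_{i_0}$ for some $i_0$, and $C^{\sharp}$ is contained in the closed subset $T_{i_0}$ of the curve $U$.

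Finally I would split on $T_{i_0}$. If $T_{i_0}$ is finite, so is $C^{\sharp}$. If $T_{i_0}=U$, then $Z_{i_0}$ contains every $B_t$, hence the dense set $\bigcup_t B_t$, hence $Z_{i_0}=B$; so $R_{i_0}$ is a dense constructible subset of $B$ and contains a dense open $V\subseteq R$. The closed set $B\setminus V$ has dimension $<\dim B$, so it contains $B_t$ for only finitely many $t$, and for each of the remaining $t$ the generic point of $B_t$ lies in $V\subseteq R$, whence $B_t\subseteq R$ by specialisation-stability, i.e. $t\in C^{\sharp}$; so $C^{\sharp}$ is cofinite. In both cases $C^{\sharp}$ is constructible. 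I expect the main obstacle to be the bookkeeping of the third paragraph: arranging the a priori only countable family of rational equivalences into an increasing exhaustion by constructible loci of bounded complexity (so that passing to closures yields an increasing chain of closed sets and Noetherian stabilisation applies), and invoking the specialisation-stability of $R$ to pass between rational triviality at all closed points of $B_t$ and rational triviality at its generic point. The supporting facts — Chevalley's theorem, semicontinuity of fibre dimension, properness of $\bcC\times_U\bcC\to U$, and the principle that over an uncountable field an irreducible variety is not a countable union of proper closed subsets — are routine.
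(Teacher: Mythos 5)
Your reduction of the condition $A_t=J(C_t)$ to the vanishing of $j_{t*}$, the encoding of that vanishing as fibrewise rational triviality of the relative cycle $\mathcal Z$ on $B=\bcC\times_U\bcC$, the presentation of the triviality locus $R$ as an increasing countable union of constructible sets $R_i$, and the verification that each $T_i=\{t\in U:B_t\subseteq Z_i\}$ is closed are all sound (as is your Case 2, where some $T_i$ equals $U$). The gap is the sentence ``the $T_i$ increase, so by Noetherianity $\bigcup_i T_i=T_{i_0}$ for some $i_0$''. Noetherianity gives the \emph{descending} chain condition on closed subsets; an ascending chain of closed subsets of a curve need not stabilise at all (take an increasing sequence of finite sets of points). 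So you cannot conclude that $C^{\sharp}$ sits inside a single closed set $T_{i_0}$, and the finite/cofinite dichotomy on which your conclusion rests collapses.

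What your argument actually establishes is: either some $T_i=U$, in which case $C^{\sharp}$ is cofinite, or every $T_i$ is finite, in which case $C^{\sharp}$ is merely countable. Over the uncountable field $k$ an infinite countable subset of a curve is not constructible, so the lemma does not follow; indeed the whole content of the statement is to exclude the intermediate possibility that infinitely many, but not almost all, fibres $C_t$ have all their points rationally equivalent in $S$. No purely formal countability/Noetherianity argument can exclude it, since $\bigcup_{t\in C^{\sharp}}B_t$ is itself a legitimate countable union of closed subsets of $B$; one needs a genuine geometric input passing from infinitely many closed points of $C$ to the generic point (note that specialisation of rational equivalence, which you invoke, only goes in the opposite direction). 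This missing input is exactly what the paper delegates to Lemma 17 of [BG] (the paper gives no independent proof), so to complete your write-up you would have to supply that step rather than the Noetherian stabilisation.
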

\begin{proof}
Proof goes in the same line as in lemma $17$ in \cite{BG}.
\end{proof}
\begin{theorem}
Suppose that $A^2(S)$ is not weakly rationally representable. Then for a very general $t$ in $C$ we have the kernel of $j_{t*}$ to be countable.
\end{theorem}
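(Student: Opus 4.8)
The plan is to argue by contradiction: assuming $A_t=J(C_t)$ for a very general $t$, I will deduce that $A^2(S)=0$, which is certainly weakly rationally representable, contrary to hypothesis. The essential geometric input is the Lefschetz pencil $\{D_u\}_{u\in D}$ fixed above: its members $D_u=S\cap H_u$ sweep out $S$, and they all pass through the base locus $B=S\cap L$, where $L$ is the axis of the pencil; since $\dim S=2$ and $\codim L=2$ in $\PR^N$, the set $B$ is finite and nonempty, and it is a set of smooth points of the general member.

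First I would suppose that $A_t=J(C_t)$ for a very general $t\in C$. By the last observation in the proof of the previous theorem this then holds for every $t$ in the complement of a countable subset $N\subset C$; shrinking, we may also require $t\in U$, so that $C_t$ is smooth and $C_t\cong D_{\pi(t)}$. For such $t$ we have $\ker j_{t*}=J(C_t)$, i.e. the pushforward $j_{t*}\colon J(C_t)\to A^2(S)$ is the zero map. Transporting this along the isomorphism $\eta_t\colon J(C_t)\to J(D_{\pi(t)})$ and using the commutative square relating $j_{t*}$ and $i_{t*}$ established above, the pushforward $i_{u*}\colon J(D_u)\to A^2(S)$ from the corresponding pencil member vanishes as well; since $\pi$ is finite and surjective this holds for all $u$ outside a countable subset $M\subset D\cong\PR^1$, which we may enlarge so as also to exclude the finitely many $u$ with $D_u$ singular.

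Next I would run the covering argument. Fix $b\in B$. For $u\notin M$ the curve $D_u$ is smooth and irreducible and contains $b$, so for any $s\in D_u$ the class $[s]-[b]$ lies in $J(D_u)$ and its image under $i_{u*}$, namely $[s]-[b]\in A^2(S)$, vanishes; hence $[s]=[b]$ in $A^2(S)$. As the $D_u$ cover $S$, every point of $S$ outside the countable union of curves $\bigcup_{u\in M}D_u$ is rationally equivalent to $b$ on $S$. I would then invoke the standard fact that, over the uncountable ground field, the set $T=\{\,s\in S:[s]=[b]\text{ in }A^2(S)\,\}$ is a countable union of Zariski closed subsets of $S$; since $T\cup\bigcup_{u\in M}D_u=S$ and $S$ is not a countable union of proper closed subsets, we must have $T=S$, hence $A^2(S)=0$. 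This contradicts the hypothesis. Therefore $A_t\ne J(C_t)$ for a very general $t$, so $A_t=0$ by the dichotomy of the previous theorem, and $\ker j_{t*}$, being a countable union of translates of the trivial abelian variety, is countable.

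The step I expect to be the main obstacle, and where I would follow \cite{BG} most closely, is the passage in the last paragraph from vanishing of $j_{t*}$ on a single fibre to vanishing of $i_{u*}$ on a Zariski-dense family of pencil members, together with the control of the countably many excluded members via the uncountability of the base field and the fact that the rational-equivalence locus $T$ is a countable union of closed subsets. The comparison isomorphism $\eta_t$ is crucial here: it replaces the pairwise disjoint fibres of $f$ by the pencil members, which do meet along $B$, and it is precisely this common base locus that makes the covering argument collapse everything to a single rational equivalence class.
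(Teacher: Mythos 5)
Your overall skeleton --- assume the bad branch $A_t=J(C_t)$ of the dichotomy for a very general $t$, use its uniformity, transport the vanishing of $j_{t*}$ to the pencil members via $\eta_t$ and the finiteness of $\pi$, and contradict the non-representability hypothesis --- is sound, and those first reductions are fine. The genuine gap is the covering step, which is exactly the point you yourself identify as the crux: you need a point $b\in B=S\cap L$ lying on \emph{every} smooth member $D_u$, with the $D_u$ regarded as curves on the same surface $S$ that carries the fibration. But the theorem concerns a morphism $f\colon S\to C$ whose composite with the finite map $\pi\colon C\to D$ is assumed to agree with the pencil map, and whose fibres $C_t$ are identified with the members $D_{\pi(t)}$ compatibly with their inclusions into $S$ --- this compatibility is precisely what the commutative square defining $\eta_t$ and the identity $\eta_t(\ker j_{t*})=\ker i_{t*}$ assert. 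Fibres of a morphism are pairwise disjoint, so the curves to which you transport the vanishing cannot share a point: if the $D_u$ were honest hyperplane sections with nonempty base locus, a base point would lie in a single fibre $C_{t_0}$, forcing every $D_u$ into that one fibre, which is absurd. So either the pencil here must be read as base-point free on $S$ (e.g.\ one is really working on the blow-up of the K3 along $B$, where the pencil map is a morphism), in which case there is no common point to collapse along; or the base locus is nonempty and the standing hypotheses (and the square you rely on) cannot hold. The common point your argument rests on lives only on the K3 downstairs, and you supply no blow-down or birational-invariance comparison to transfer the conclusion $A^2(S)=0$ back to the surface of the theorem.

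The strategy can be repaired, but the repair changes the output and in substance rejoins the paper's route. Replace the base point by any irreducible multisection $\Gamma\subset S$ of $f$: for $t$ outside a countable set one has $j_{t*}=0$ and $C_t\cap\Gamma\neq\emptyset$, so every point of $S$ off countably many fibres is rationally equivalent to a point of $\Gamma$; your countable-union argument then shows that $A^2(S)$ coincides with the image of $J(\wt{\Gamma})$, i.e.\ it is weakly rationally representable --- still a contradiction, but not $A^2(S)=0$. The paper never uses any base-locus geometry: it first upgrades ``very general'' to ``general'' via the constructibility of the set $C^{\sharp}$, then spreads the generic fibre $C_{\bar\eta}$ together with its diagonal correspondence over (an open part of) a curve $C'$ finite over $C$, and invokes the decomposition of $A^2(S')\otimes\QQ$ from theorem 19 of \cite{BG} as the image of the spread correspondence plus finitely many images of $j_{t*}$, each weakly representable, which yields weak rational representability of $A^2(S)$ and the desired contradiction.
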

\begin{proof}
First we show by using the constructibility of the above set $C^{\sharp}$ that if $j_{t*}$ is zero for a very general $t$ then actually $j_{t*}$ is zero for a general $t$. Then we take $C_{\bar\eta}$, since $A^2(C_{\bar\eta})=0$ we get that there exists a curve $\Gamma_{\bar\eta}$, a correspondence $Z_{\bar\eta}$ on $\Gamma_{\bar\eta}\times C_{\bar\eta}$ such that $Z_{\bar\eta*}$ is onto from $J(\Gamma_{\bar{\eta}})$ to $A^1(C_{\bar\eta})$. We can very well choose this curve $\Gamma_{\bar\eta}$ to $C_{\bar\eta}$, and $Z_{\bar\eta}$ to be the diagonal in the two fold product of $C_{\bar\eta}$ and then we spread $C_{\bar\eta}$ to a surface $S'$ and the diagonal to a correspondence $\bcZ$ over a Zariski open subset $V$ of some curve $C'$, such that $C'\to C$ we have a finite map. Then arguing as in theorem $19$ in \cite{BG} we prove that $A^2(S')$ tensored with $\QQ$ is equal to the direct sum of the image of $\bcZ_*$ and images of the homomorphisms $j_{t*}$. Then by using the fact that $j_{t*}=0$ for all but a finitely many $t$, we prove that the above direct sum is finite and each of the summands is weakly representable, so gives rational weak representability of $A^2(S')$. Since $S'$ is a blow up of $S$, it follows that $A^2(S)$ is rationally weakly representable contradicting our assumption.

All this arguments are taken from \cite{BG} theorem $19$, where they first appeared.
\end{proof}





\section{Curves on a surface and monodromy}
Let $S$ be a smooth, projective, surface over $\CC$. Let us fix an embedding of $S$ inside $\PR^N$. Let $t$ be a closed point in ${\PR^N}^*$, consider the corresponding hyperplane $H_t$ inside $\PR^N$ and consider its intersection with $S$, then we get a curve $C_t$ inside $S$. By Bertini's theorem, a general such hyperplane section of $S$ will be smooth and irreducible. Now consider two such curves $C_t,C_s$ in $S$. Then we have the following commutative diagram.

$$
  \diagram
   \Sym^n C_t\times \Sym^n C_s\ar[dd]_-{} \ar[rr]^-{} & & \Sym^{2n}S\ar[dd]^-{} \\ \\
  A_0(C_t)\times A_0(C_s) \ar[rr]^-{} & & A_0(S)
  \enddiagram
  $$
Here the morphism from $\Sym^n C_t\times \Sym^n C_s$ to $\Sym^{2n}S$ is given by
$$(\sum_i P_i,\sum_j Q_j)=\sum_i P_i+\sum_j Q_j$$
and the homomorphism from $A_0(C_t)\times A_0(C_s)$ to $A_0(S)$
is given by
$$j_{t,s*}=j_{t*}+j_{s*}\;.$$
It is easy to see that the above diagram is commutative (since $\CC$ is algebraically closed). By the Abel-Jacobi theorem $A_0(C_t)\times A_0(C_s)$ is isomorphic to $J(C_t)\times J(C_s)$. Following the argument of \cite{BG}, proposition $6$ we get that the kernel of $j_{t,s*}$ is a countable union of translates of an abelian subvariety of $J(C_t)\times J(C_s)$. Call this abelian subvariety $A_{(t,s)}$. Now consider a net on $S$, that is two Lefschetz pencils $D_1,D_2$ on $S$. Then for a general $(t,s)$ on $D_1\times D_2$, the curves $C_t,C_s$ will be smooth and irreducible. Now we prove that for a general $(t,s)$, $A_{(t,s)}$ will either be $\{0\}$ or $J(C_t)$ or $J(C_s)$ or all of $J(C_t)\times J(C_s)$. Suppose also that $H^3(S,\QQ)=0$. This is the case for example of a K3 surface.

\begin{theorem}
\label{theorem 1}
For a very general $(t,s)$ in $D_1\times D_2$, the abelian variety $A_{t,s}$ is either $\{0\}$ or $J(C_t)$ or $J(C_s)$ or $J(C_t)\times J(C_s)$.
\end{theorem}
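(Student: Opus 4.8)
The plan is to run, for the two Lefschetz pencils simultaneously, the spreading-out and monodromy argument of \cite{BG} and \cite{Voi}, inserting a Goursat-type step at the end. First I would pass to the geometric generic point. Writing $\bar{\eta}_1,\bar{\eta}_2$ for the geometric generic points of $D_1,D_2$, the kernel of $j_{(\bar{\eta}_1,\bar{\eta}_2)*}$ from $J(C_{\bar{\eta}_1})\times J(C_{\bar{\eta}_2})$ to $A_0$ of the generic fibre is, by the argument of \cite{BG}, Proposition $6$, a countable union of translates of an abelian subvariety $A_{(\bar{\eta}_1,\bar{\eta}_2)}$; since $\CC$ is uncountable, $A_{(t,s)}\cong A_{(\bar{\eta}_1,\bar{\eta}_2)}$ inside $J(C_t)\times J(C_s)\cong J(C_{\bar{\eta}_1})\times J(C_{\bar{\eta}_2})$ for very general $(t,s)$, so it suffices to determine $A_{(\bar{\eta}_1,\bar{\eta}_2)}$.

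Next I would spread out. Pick a finitely generated field $L$ with $\CC(D_1\times D_2)\subseteq L\subseteq\overline{\CC(D_1\times D_2)}$ over which the inclusion $A_{(\bar{\eta}_1,\bar{\eta}_2)}\hookrightarrow J(C_{\bar{\eta}_1})\times J(C_{\bar{\eta}_2})$ is defined, realise $L=\CC(B)$ with $B\to D_1\times D_2$ dominant and generically finite, and over a small Zariski open $V\subseteq B$ spread this inclusion to a closed immersion of abelian schemes $\bcA\hookrightarrow\bcJ$, where $\bcJ$ is the pullback to $V$ of the product of the relative Jacobian of the family over $D_1$ and that of the family over $D_2$. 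Mimicking the $l$-adic monodromy construction of the previous section and of \S$4$ of \cite{BG} — form $R^1$ of the constant sheaves $\ZZ/l^n\ZZ$, use that $\bcA,\bcJ$ are smooth and proper over (a smaller) $V$, apply proper base change, take the inverse limit — I obtain, via the K\"unneth identification of $H^1$ of a product of abelian varieties with $H^1(C_{\bar{\eta}_1})\oplus H^1(C_{\bar{\eta}_2})$, that
$$W:=H^1_{\ac{e}t}(A_{(\bar{\eta}_1,\bar{\eta}_2)},\QQ_l)\ \subseteq\ H^1_{\ac{e}t}(C_{\bar{\eta}_1},\QQ_l)\oplus H^1_{\ac{e}t}(C_{\bar{\eta}_2},\QQ_l)=:V_1\oplus V_2$$
is a submodule under $\pi_1(V,(\bar{\eta}_1,\bar{\eta}_2))$.

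Now I bring in monodromy and conclude. Let $G$ be the monodromy image in $\mathrm{GL}(V_1\oplus V_2)$ and $G_i$ the part of it coming from loops in the $i$-th factor. Moving $t$ alone fixes $C_s$, so $G_1$ acts trivially on $V_2$; and since $D_1$ is a Lefschetz pencil and $H^3(S,\QQ)=0$ forces the Gysin map $H^1(C_{\bar{\eta}_1})\to H^3(S)$ to vanish, all of $V_1$ is vanishing cohomology, so the Picard--Lefschetz formula makes $G_1$ act irreducibly on $V_1$, and nontrivially as soon as $\dim V_1>0$. Symmetrically for $G_2$ and $V_2$. A Goursat-type analysis of $W$ then finishes: $p_i(W)$ and $W\cap V_i$ are $G_i$-stable, hence $0$ or $V_i$; if $p_1(W)=0$ or $p_2(W)=0$ then $W\in\{0,V_1,V_2\}$; if $W\cap V_1=V_1$ or $W\cap V_2=V_2$ then $W=V_1\oplus V_2$; in the only remaining case $W$ is the graph of an isomorphism $V_1\xrightarrow{\sim}V_2$ intertwining the irreducible nontrivial $G_1$-action on $V_1$ with the trivial $G_1$-action on $V_2$, which is impossible unless $V_1=0$, when we are back in an earlier case. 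So $W$ is one of $0,V_1,V_2,V_1\oplus V_2$, and these are exactly the $\QQ_l$-Tate modules of $\{0\},J(C_{\bar{\eta}_1}),J(C_{\bar{\eta}_2}),J(C_{\bar{\eta}_1})\times J(C_{\bar{\eta}_2})$; since an abelian subvariety is determined by its rational Tate module inside that of the ambient variety, $A_{(\bar{\eta}_1,\bar{\eta}_2)}$ is one of these four, and specialising via the first paragraph gives the theorem for very general $(t,s)$.

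The step I expect to be the main obstacle is the monodromy input of the third paragraph. After passing to the generically finite cover $B$, the monodromy one can use along each pencil direction is only a finite-index subgroup of the fundamental group of the discriminant complement in $D_i$, and it is not formal that such a subgroup still acts irreducibly on $V_i$; this is exactly the point settled, via the Picard--Lefschetz formula and the tame fundamental group, in \S$4$ of \cite{BG}, and the same device applies here. The surrounding bookkeeping — that the generic inclusion of abelian varieties really spreads to a closed immersion of abelian schemes over a suitable $V$, so that $W$ appears as the stalk of a sub-local-system — is routine, and the Goursat computation itself is short and uses only the $G_1$- and $G_2$-stability of $W$.
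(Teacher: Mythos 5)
Your proposal is correct in substance and follows the same overall blueprint as the paper (spread the generic abelian subvariety over a generically finite cover of $D_1\times D_2$, obtain a monodromy-stable subspace, upgrade invariance from the finite-index subgroup to the full product group via the Picard--Lefschetz trick, then classify invariant subspaces using irreducibility along each pencil), but it differs in two genuine ways. First, you run the argument $l$-adically at the geometric generic point of $D_1\times D_2$, in the style of the paper's Section 2 and of \S 4 of \cite{BG}, whereas the paper's own proof of this theorem works over $\CC$ at a very general closed point, spreads over a surface $S'$ finite over $\PR^1\times\PR^1$, and uses Ehresmann's theorem and Betti cohomology; the two realizations are interchangeable here, and yours has the merit of not invoking the non-canonical isomorphism $\overline{\CC(t,s)}\cong\CC$. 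Second, your Goursat-type step is a real addition: the paper simply asserts that irreducibility of the two factor representations forces any $G$-invariant subspace of $H^1(C_t,\QQ)\oplus H^1(C_s,\QQ)$ to be $\{0\}$, $H^1(C_t,\QQ)$, $H^1(C_s,\QQ)$ or everything, which as stated ignores possible graph-type submodules; your observation that loops in the first pencil act trivially on the second summand and nontrivially (indeed irreducibly, since $H^3(S,\QQ)=0$ makes all of $H^1$ vanishing cohomology) on the first rules these out and makes the classification airtight. One small point to patch: for an abelian subvariety $A\subseteq J$ the restriction $H^1(J)\to H^1(A)$ is a surjection, not an inclusion, so your $W:=H^1_{\acute{e}t}(A_{(\bar\eta_1,\bar\eta_2)},\QQ_l)\subseteq V_1\oplus V_2$ should be phrased with rational Tate modules $V_l(A)\subseteq V_l(J)$ (which you in fact use at the end, and which determine the abelian subvariety), or with the Gysin embedding of $H^{2d-1}$ as the paper does; with that adjustment the argument is complete.
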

\begin{proof}
The argument comes from monodromy. We have a natural monodromy representation of the fundamental group of $D_1\setminus \{0_1,\cdots,0_m\}$ and $D_2\setminus \{0_1',\cdots,0_n'\}$
on the Gysin kernels $H^1(C_t,\QQ)$ and $H^1(C_s,\QQ)$ respectively, for a very general $t,s$ such that $C_t,C_s$ are smooth. By theorem 3.27 in \cite{Voisin} we have that these monodromy representations are irreducible. So it will follow that the induced representation of $G=\pi_1(D_1\setminus \{0_1,\cdots,0_m\},t)\times \pi_1(D_1\setminus \{0_1',\cdots,0_n'\},s)$ on $H^1(C_t,\QQ)\oplus H^1(C_s,\QQ)$ has the following property. Any $G$ invariant subspace of it is either $\{0\}$ or $H^1(C_t,\QQ)$ or $H^1(C_s,\QQ)$ or all of $H^1(C_t,\QQ)\oplus H^1(C_s,\QQ)$. Consequently, by using the correspondence between Hodge structures of weight one and abelian varieties we have that the only non-trivial abelian subvarieties of $J(C_t)\times J(C_s)$ are either $J(C_t)$ or $J(C_s)$. Now to prove that $A_{(t,s)}$ is either one of these four possibilities we have to show that the Hodge structure corresponding to $A_{(t,s)}$ is $G$ equivariant.  So for a general $t,s$ we have an abelian subvariety $A_{(t,s)}$ of $J(C_t)\times J(C_s)$. Now consider the isomorphism of $\CC$ with $\overline{\CC(t,s)}$ and view $A_{(t,s)}$ and $J(C_t)\times J(C_s)$ as abelian varieties over $\overline{\CC(t,s)}$. Let $L$ be the minimal field of definition of $A_{(t,s)}$ and $J(C_t)\times J(C_s)$ in $\overline{\CC(t,s)}$. Since $L$ is finitely generated over $\CC(t,s)$ and contained in $\overline{\CC(t,s)}$ we have $L$  finite extension of $\CC(t,s)$. Let $S'$ be a surface  such that $\CC(S')$ is isomorphic to $L$, respectively and $S'$ maps finitely onto $\PR^1\times \PR^1$. Then we have $A_{(t,s)}$ and $J(C_t)\times J(C_s)$ defined over $L$ and we can spread $A_{(t,s)}$ and $J(C_t)\times J(C_s)$ over some Zariski open $U$ in $S'$. Call these spreads as $\bcA,\bcJ$. Then throwing out some more points from $U$ we will get that the morphism from $\bcA,\bcJ$ to $U$ is a proper, submersion of smooth manifolds, if we view everything over $\CC$(again here we use the non-canonical isomorphism $\overline{\CC(t,s)}=\CC$). Then by Ehressmann's theorem we have two fibrations $\bcA\to U$ and $\bcJ\to U$. Since any fibrartion gives rise to a local system and hence a monodromy representation of the fundamental group of $\pi_1(U,t')$ on $H^{2d-1}(A_{(t,s)},\QQ),H^{2d'-1}(J(C_t)\times J(C_s),\QQ)\cong H^1(C_t,\QQ)\oplus H^1(C_s,\QQ)$ where $d,d'$ are dimensions of $A_{(t,s)},J(C_t)\times J(C_s)$ (here we might have to replace $(t,s)$ by $(t',s')$, but for very two general points the fibers $A_{(t,s)},A_{(t',s')}$ will be isomorphic so are $J(C_t)\times J(C_s)$ and $J(C_t')\times J(C_s')$). Now $\pi_1(U,t')$ is a finite index subgroup of $G$. We prove that $H=H^{2d-1}(A_{(t,s)},\QQ)$ is a $G$-equivariant subspace of $H^1(C_t,\QQ)\oplus H^1(C_s,\QQ)$ (since $A_{(t,s)}$ is a sub-abelian variety of $J(C_t)\times J(C_s)$, $H^{2d-1}(A_{(t,s)},\QQ)$ is a subspace of $H^1(C_t,\QQ)\oplus H^1(C_s,\QQ)$). Now $G$ acts on $H^1(C_t,\QQ)\oplus H^1(C_s,\QQ)$ by the Picard-Lefschtez formula, that is
$$\gamma\times \gamma'. (\alpha+\beta)=\gamma.\alpha+\gamma'.\beta$$
which is equal to
$$\alpha-\langle\alpha,\delta_{\gamma}\rangle\delta_{\gamma}+\beta
-\langle \beta,\delta_{\gamma'}\rangle\delta_{\gamma'}\;. $$
Now suppose that $\alpha+\beta$ belongs to $H$. We have to prove that for all $\gamma\times \gamma'$ in $G$, $\gamma\times \gamma'.\alpha+\beta$ belongs to $H$.
Consider $$(\gamma\times\gamma')^m(\alpha+\beta)=\alpha-m
\langle\alpha,\delta_{\gamma}\rangle\delta_{\gamma}+
\beta-m\langle\beta,\delta_{\gamma'}\rangle\delta_{\gamma'}
$$
$\delta_{\gamma}$ is the vanishing cycles corresponding to $\gamma$.
Since $(\gamma\times \gamma')^m$ is in $\pi_1(U,t')$ we have
$$(\gamma\times \gamma)^m(\alpha+\beta)-\alpha-\beta$$
is in $H$. That would mean that
$$m\langle\alpha,\delta_{\gamma}\rangle\delta_{\gamma}
+m\langle\beta,\delta_{\gamma'}\rangle \delta_{\gamma'}$$
is in $H$, by applying the Picard Lefschetz once again we get that
$$(\gamma\times \gamma')^m(\alpha+\beta)$$
is in $H$. So $H$ is $G$ equivariant, hence it is either $\{0\}$ or $H^1(C_t,\QQ),H^1(C_s,\QQ)$ or all of $H^1(C_t,\QQ)\oplus H^1(C_s,\QQ)$. So the corresponding $A_{(t,s)}$ will either be zero or $J(C_t)$ or $J(C_s)$ or $J(C_t)\times J(C_s)$.
\end{proof}
This proves that if for one very general $(t,s)$, $A_{(t,s)}$ is one of the above mentioned possibilities then for another very general $(t',s')$, $A_{(t',s')}$ will achieve the same possibility.

\begin{theorem}
Suppose $A_0(S)$ is not isomorphic to the Albanese variety $Alb(S)$.  Consider a net of Lefschetz pencils on $S$ as before. Then for a very general $(t,s)$, $A_{(t,s)}$ is actually $\{0\}$ or $J(C_t)$ or $J(C_s)$.
\end{theorem}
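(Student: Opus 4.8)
The plan is to argue by contradiction, eliminating the fourth possibility $A_{(t,s)}=J(C_t)\times J(C_s)$ that Theorem \ref{theorem 1} leaves open. By the remark following that theorem the value of $A_{(t,s)}$ among $\{0\}$, $J(C_t)$, $J(C_s)$, $J(C_t)\times J(C_s)$ is one and the same for every very general $(t,s)$, so it suffices to rule out $A_{(t,s)}=J(C_t)\times J(C_s)$ for a very general $(t,s)$. We may also assume that the general hyperplane sections of $S$ have positive genus, since otherwise there is nothing to prove.

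Suppose $A_{(t,s)}=J(C_t)\times J(C_s)$ for a very general $(t,s)$. Then $\ker(j_{t,s*})$, being a countable union of translates of the whole abelian variety $J(C_t)\times J(C_s)$, equals $J(C_t)\times J(C_s)$; hence $j_{t,s*}=j_{t*}+j_{s*}=0$, and restricting to $J(C_t)\times\{0\}$ and to $\{0\}\times J(C_s)$ gives $j_{t*}=0$ and $j_{s*}=0$. So $j_{t*}=0$ for every $t$ with $(t,s)$ very general in $D_1\times D_2$. Fixing $s_0\in D_2$ very general enough that the trace on $D_1\times\{s_0\}$ of each of the countably many proper closed ``exceptional'' subsets of $D_1\times D_2$ is still a proper closed subset of $D_1$, we obtain $j_{t*}=0$ for a very general $t$ in the Lefschetz pencil $D_1$ on $S$; in particular $\ker(j_{t*})=J(C_t)$ is uncountable for very general $t$.

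It remains to contradict this. The hypothesis $A_0(S)\not\cong Alb(S)$ forces $A^2(S)=A_0(S)$ to be not weakly rationally representable: if it were, the Bloch--Srinivas decomposition of the diagonal with $\QQ$-coefficients would make the Albanese kernel $T(S)$ a torsion group, and Roitman's theorem makes $T(S)$ torsion-free, so $T(S)=0$ and the Albanese map would be an isomorphism. Thus the theorem of Section 2 stating that non-weak-rational-representability of $A^2(S)$ implies countability of $\ker(j_{t*})$ for a very general $t$ applies; its proof, that of \cite{BG} Theorem 19, works verbatim for a Lefschetz pencil on $S$, since one may blow up the base locus to get an honest fibration over $\PR^1$ without altering $A_0$ or the maps $j_{t*}$ (the exceptional curves being rational and the hyperplane sections unchanged away from finitely many points). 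Applied to $D_1$ it yields $\ker(j_{t*})$ countable for a very general $t\in D_1$, contradicting the previous paragraph. Hence $A_{(t,s)}\neq J(C_t)\times J(C_s)$ for a very general $(t,s)$, and by Theorem \ref{theorem 1} the remaining options are $A_{(t,s)}=\{0\}$, $J(C_t)$ or $J(C_s)$.

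The main obstacle is not the deduction above but its two inputs, which must be handled with care: verifying that the spreading argument of \cite{BG} Theorem 19 — the geometric generic fibre spread to a surface $S'$ finite over $\PR^1$, the diagonal correspondence $\bcZ$, and the decomposition $A^2(S')\otimes\QQ\cong\im(\bcZ_*)\oplus\bigoplus_t\im(j_{t*})$ in which all but finitely many summands vanish — really goes through for the pencil $D_1$, exactly as in \cite{BG}; and giving a precise statement, with attention to the tensoring with $\QQ$ and to torsion, of the standard fact that weak rational representability of $A_0(S)$ implies $A_0(S)\cong Alb(S)$.
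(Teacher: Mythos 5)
Your proposal is correct and takes essentially the same route as the paper, whose entire proof is the remark that the statement ``follows by analyzing the argument of theorem 19 in \cite{BG}'': you rule out $A_{(t,s)}=J(C_t)\times J(C_s)$ by noting it would force $j_{t*}=0$, hence $\ker(j_{t*})=J(C_t)$ uncountable, for a very general $t$ in a single pencil, contradicting the countability of $\ker(j_{t*})$ that Theorem 19 of \cite{BG} yields when $A_0(S)\not\cong Alb(S)$. Your additional steps --- freezing a very general $s_0$ to pass from the net to the pencil $D_1$, and bridging the hypothesis $A_0(S)\not\cong Alb(S)$ with non-weak-rational-representability via Bloch--Srinivas and Roitman --- are sound fillings-in of details the paper leaves implicit.
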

\begin{proof}
This follows by analyzing the argument of theorem $19$ in \cite{BG}.
\end{proof}

\subsection{Self products of curves on self product of K3 surfaces}
Let us consider a $K3$ surface $S$ and embed it into some $\PR^N$. Let us take a Lefschetz pencil $D$ on $S$, then for a general $t$ in $D$ we have $C_t$ smooth and irreducible in $S$. So we have a closed embedding of $C_t\times C_t$ in $S\times S$. Then we consider the push-forward induced by this embedding, denote it by $j_{t*}$ from $A_0(C_t\times C_t)$ to $A_0(S\times S)$. Since $A_0(C_t)\times A_0(C_t)$ maps surjectively onto $A_0(C_t\times C_t)$, we get that $A_0(C_t\times C_t)$ is weakly representable and hence $A_0(C_t\times C_t)$ is isomorphic to $Alb(C_t\times C_t)$, see \cite{Voisin}[proof of theorem 10.11]. Then arguing as in proposition $6$ in \cite{BG} we get the following.

\begin{proposition}
The kernel of $j_{t*}$ is a countable union of translates of an abelian subvariety $A_{0t}$ of the Albanese variety $Alb(C_t\times C_t)$.
\end{proposition}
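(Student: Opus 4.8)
The plan is to run the argument of proposition $6$ of \cite{BG} (the statement reproduced as the first theorem of Section $2$); that argument uses only that the source of $j_{t*}$ is an abelian variety and never that it is the Jacobian of a curve, so it applies verbatim with $Alb(C_t\times C_t)$ in place of $J(C_t)$. We are given that $A_0(C_t\times C_t)$ is weakly representable, so the Albanese map is a regular isomorphism $A_0(C_t\times C_t)\cong Alb(C_t\times C_t)=:A$; since $j_{t*}$ is a homomorphism of abelian groups on Chow groups, transporting it through this isomorphism produces a group homomorphism $\phi\colon A\to A_0(S\times S)$, and the claim becomes that $\ker\phi$ is a countable union of translates of an abelian subvariety $A_{0t}\subseteq A$.

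First I would record the algebraicity of $\phi$. For $n\gg 0$ the assignment $(\sum_i P_i,\sum_j Q_j)\mapsto\sum_i P_i-\sum_j Q_j$, followed by the Albanese isomorphism, gives a surjective morphism $q\colon\Sym^n(C_t\times C_t)\times\Sym^n(C_t\times C_t)\twoheadrightarrow A$, and $\phi\circ q$ is computed by first pushing the points forward along the closed embedding $C_t\times C_t\hookrightarrow S\times S$, i.e.\ by a morphism $\Phi\colon\Sym^n(C_t\times C_t)\times\Sym^n(C_t\times C_t)\to\Sym^n(S\times S)\times\Sym^n(S\times S)$, and then passing to $A_0(S\times S)$; hence $\ker\phi=q\bigl(\{z:\Phi(z)\sim 0\}\bigr)$. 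The key point --- the \emph{spreading out of a rational equivalence} used throughout \cite{BG} and \cite{Voi} --- is that bounding the complexity of a rational equivalence witnessing $\Phi(z)\sim 0$ in $A_0(S\times S)$ (the number of curves involved, their degrees and incidence data, and the multiplicities) turns the locus of such $z$ into the image of a projection from an auxiliary quasi-projective variety, hence into a Zariski closed subset; letting the bound grow and applying $q$, one exhibits $\ker\phi$ as a countable union of Zariski closed subsets of $A$.

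Then I would invoke the group-theoretic fact already used in \cite{BG}: a subgroup $\Gamma$ of an abelian variety $A$ over an uncountable algebraically closed field which is a countable union of Zariski closed subsets is a countable union of translates of an abelian subvariety. In outline, the abelian subvarieties $B\subseteq A$ with $B\subseteq\Gamma$ are closed under sum (because $\Gamma$ is a subgroup) and of bounded dimension, so there is a largest one $A_{0t}$; replacing $A$ by $A/A_{0t}$ and $\Gamma$ by its image, one is reduced to showing that a subgroup which is a countable union of Zariski closed subsets and contains no positive-dimensional abelian subvariety must be countable. Were it uncountable, one of the countably many closed subsets would be positive-dimensional; an irreducible component $V$ of it, translated through the origin, generates an abelian subvariety $B'$, and $B'=U+U$ for a dense open $U\subseteq B'$ contained in the subgroup, so $B'\subseteq\Gamma$ --- a contradiction. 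Applying this with $\Gamma=\ker\phi$, which is legitimate because $\CC$ is uncountable, gives the asserted description, with $A_{0t}$ the abelian subvariety produced by the lemma.

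The main obstacle is the boundedness assertion of the second paragraph: that rational equivalence to zero in $A_0(S\times S)$ of the algebraic family $\{\Phi(z)\}_z$ is witnessed inside a parameter space of bounded dimension, so that its level sets are Zariski closed. This is the spread argument of \cite{BG} and is routine there, but it is the one place where anything must actually be checked; the reduction (to the situation where $A$ is an abelian variety and $\phi$ a homomorphism) via weak representability of $A_0(C_t\times C_t)$, and the passage from a countable closed cover of a subgroup to a countable union of translates of an abelian subvariety, are both formal and literally those of proposition $6$ of \cite{BG}.
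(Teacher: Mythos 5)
Your proposal is correct and follows essentially the same route as the paper, which simply invokes proposition $6$ of \cite{BG} after identifying $A_0(C_t\times C_t)$ with $Alb(C_t\times C_t)$ via weak representability; you have merely unpacked that cited argument (spreading out rational equivalences to write the kernel as a countable union of Zariski closed subsets, then the lemma that such a subgroup of an abelian variety over an uncountable field is a countable union of translates of an abelian subvariety).
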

\begin{proof}
See proposition 6 in \cite{BG}.
\end{proof}

A simple computation using Kunneth theorem and Lefschetz Hyperplane theorem shows that $H^7(S\times S,\QQ)$ is zero. Also $H^3(C_t\times C_t,\QQ)=H^1(C_t,\QQ)\oplus H^1(C_t,\QQ)$. This tells us that there is a natural action of $\pi_1(D\setminus \{0_1,\cdots,0_m\},t)$ on $H^3(C_t\times C_t,\QQ)$, which has the property that any $\pi_1(D\setminus \{0_1,\cdots,0_m\},t)$ equivariant subspace of it is either $\{0\}$ or $H^1(C_t,\QQ)$ or $H^3(C_t\times C_t,\QQ)$. Now we prove the following.
\begin{theorem}
$A_{0t}$ is either $0$, or an abelian variety isogenous to $J(C_t)$ or it is all of $Alb(C_t\times C_t)$.
\end{theorem}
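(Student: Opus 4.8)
The plan is to reproduce, for the family of surfaces $\{C_t\times C_t\}_{t\in D}$, the spreading‑plus‑monodromy argument already used in Theorem \ref{theorem 1} and in the fibration theorems of Section~2, and to feed it into the displayed dichotomy: the monodromy group $G:=\pi_1(D\setminus\{0_1,\dots,0_m\},t)$ has no invariant subspace in $H^3(C_t\times C_t,\QQ)\cong H^1(C_t,\QQ)\oplus H^1(C_t,\QQ)$ other than $\{0\}$, a copy of $H^1(C_t,\QQ)$, and the whole space. Concretely, I want to exhibit the subspace $H:=H^{2d-1}(A_{0t},\QQ)$ — which sits inside $H^{2d'-1}(\Alb(C_t\times C_t),\QQ)$ because $A_{0t}$ is a sub‑abelian variety, and the latter is, up to a Tate twist, $H^3(C_t\times C_t,\QQ)\cong H^1(C_t,\QQ)\oplus H^1(C_t,\QQ)$; here $d=\dim A_{0t}$ and $d'=\dim\Alb(C_t\times C_t)$ — as a $G$‑equivariant subspace. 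The trichotomy for $A_{0t}$ is then read off from the trichotomy for $H$ via the equivalence between weight‑one Hodge structures (up to the evident Tate twist) and abelian varieties up to isogeny, a copy of $H^1(C_t,\QQ)$ producing an abelian subvariety isogenous to $J(C_t)$.

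First I would take $t$ very general, identify $C_t$ with the geometric generic fibre $C_{\bar\eta}$ of $S\to D$ via an isomorphism $\CC\cong\overline{\CC(D)}$, and regard $A_{0t}\subseteq\Alb(C_t\times C_t)$ as abelian varieties over $\overline{\CC(D)}$. Let $L$ be their minimal field of definition; it is finite over $\CC(D)$, so there is a smooth projective curve $D'$ with $\CC(D')=L$ together with a finite map $D'\to D$. Over a Zariski open $U'\subseteq D'$ I would spread $A_{0t}$ and $\Alb(C_t\times C_t)$ to abelian schemes $\bcA\subseteq\bcJ$, the inclusion and the Gysin pushforward $j_{t*}$ spreading along with them (this is the content of Proposition 6 of \cite{BG}, used already for the Proposition above, and is where the vanishing $H^7(S\times S,\QQ)=0$ enters, playing the role that $H^3(S,\QQ)=0$ plays in Theorem \ref{theorem 1}). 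Shrinking $U'$, the structure morphisms become proper submersions of complex manifolds, hence fibrations by Ehresmann's theorem, producing monodromy representations of $\pi_1(U',t')$ on $H^{2d-1}(A_{0t},\QQ)$ and on $H^{2d'-1}(\Alb(C_t\times C_t),\QQ)$ compatible with the inclusion $H\hookrightarrow H^3(C_t\times C_t,\QQ)$. Now $\pi_1(U',t')$ has finite index in $G$, and the $G$‑action on $H^3(C_t\times C_t,\QQ)=H^1(C_t,\QQ)\oplus H^1(C_t,\QQ)$ is the \emph{diagonal} one induced from the pencil $D$: the loop $\gamma_i$ about $0_i$ acts by $(\alpha,\beta)\mapsto(\alpha-\langle\alpha,\delta_i\rangle\delta_i,\;\beta-\langle\beta,\delta_i\rangle\delta_i)$, where $\delta_i$ is the vanishing cycle, i.e.\ as the product of the two commuting Picard--Lefschetz transvections along $(\delta_i,0)$ and $(0,\delta_i)$.

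The decisive step is the Picard--Lefschetz trick to upgrade $\pi_1(U',t')$‑invariance to $G$‑invariance, exactly as in Theorem \ref{theorem 1}. Choose $m_i$ with $\gamma_i^{m_i}\in\pi_1(U',t')$; since the two transvections commute, $\gamma_i^{m_i}$ carries $(\alpha,\beta)$ to $(\alpha-m_i\langle\alpha,\delta_i\rangle\delta_i,\;\beta-m_i\langle\beta,\delta_i\rangle\delta_i)$, so for $(\alpha,\beta)\in H$ the vector $(m_i\langle\alpha,\delta_i\rangle\delta_i,\;m_i\langle\beta,\delta_i\rangle\delta_i)$ lies in $H$, hence so does $(\langle\alpha,\delta_i\rangle\delta_i,\;\langle\beta,\delta_i\rangle\delta_i)$, hence $\gamma_i(\alpha,\beta)\in H$. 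As the loops $\gamma_i$ generate $G$, the space $H$ is $G$‑equivariant, so by the displayed classification it is $\{0\}$, a copy of $H^1(C_t,\QQ)$, or all of $H^3(C_t\times C_t,\QQ)$; correspondingly $A_{0t}$ is $0$, an abelian subvariety isogenous to $J(C_t)$, or $\Alb(C_t\times C_t)$. As in the fibration theorems, the whole argument already applies to the geometric generic fibre, so the alternative that is realised is the same for every very general $t$ (and the corresponding statement holds for all $t$ outside a countable set).

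I expect the real work to lie in two places. The first is making the spread of the inclusion $A_{0t}\subseteq\Alb(C_t\times C_t)$ genuinely functorial with respect to the family of Gysin pushforwards $\{j_{t*}\}$, so that the monodromy on $H$ is literally the restriction of the one on $H^3(C_t\times C_t,\QQ)$; this is a transcription of Proposition 6 and the spread argument of \cite{BG}, but has to be checked with care because here the ambient variety is the $4$‑fold $S\times S$ and the two factors of $C_t\times C_t$ move with a \emph{single} parameter, so that the monodromy is the diagonal action rather than a product of two independent ones as in the net case. The second is purely formal: recording that every $G$‑invariant line in $H^1(C_t,\QQ)\oplus H^1(C_t,\QQ)$ is abstractly isomorphic to $H^1(C_t,\QQ)$ and therefore yields an abelian subvariety merely isogenous to $J(C_t)$ rather than equal to it, which is why the statement is phrased with ``isogenous''. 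As for the net, the remaining alternative $A_{0t}=\Alb(C_t\times C_t)$ is not excluded by this argument and is expected to be ruled out afterwards under a non‑representability hypothesis on $A_0(S\times S)$.
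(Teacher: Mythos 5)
Your proposal is correct and follows essentially the same route as the paper's proof: spread $A_{0t}\subseteq Alb(C_t\times C_t)$ over a curve finite over the pencil via the minimal field of definition, use Ehresmann's theorem to get a monodromy action of a finite-index subgroup $\pi_1(U,t')$ on $H^{2d-1}(A_{0t},\QQ)\subseteq H^1(C_t,\QQ)\oplus H^1(C_t,\QQ)$, upgrade to full $\pi_1(\PR^1\setminus\{0_1,\dots,0_m\},t)$-invariance by the power-of-$\gamma$ Picard--Lefschetz trick, and then invoke the stated trichotomy for invariant subspaces of the diagonal action. Your extra remarks (the diagonal versus product action, and that an invariant copy of $H^1(C_t,\QQ)$ only gives an abelian subvariety isogenous to $J(C_t)$) are consistent with, and slightly more explicit than, the paper's treatment.
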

\begin{proof}
Let us consider a $t$ such that $C_t$ is smooth. Consider the abelian variety $A_{0t}$ and $Alb(C_t\times C_t)$. Consider the non-canonical isomorphism of $\overline{\CC(x)}$ with $\CC$ and view $A_{0t},Alb(C_t\times C_t)$ as schemes over $\overline{\CC(x)}$. Let $L$ be the minimal field of definition of $A_{0t},Alb(C_t\times C_t)$ in $\overline{\CC(x)}$. Then $L$ is a finite extension of $\CC(x)$ and let $C$ be a curve mapping finitely onto $\PR^1$ and have function field $L$. Then we spread $A_{0t},Alb(C_t\times C_t)$ over some Zariski open $U$ in $C$. Denote the spreads by $\bcA_0,\bcA$ over $U$. Now throwing out some more points from $U$ we can assume that $\bcA_0\to U,\bcA\to U$ are proper submersions. Therefore the morphisms $\bcA_0\to U,\bcA\to U$ are fibrartions by the Ehressmann's fibration theorem. So this will give us that, $H^{2d-1}(A_{0t},\QQ),H^{2g-1}(Alb(C_t\times C_t),\QQ)$ are $\pi_1(U,t')$ modules for some $t'$ which maps to $t$. Here $d,g$ are dimensions of $A_{0t},Alb(C_t\times C_t)$. Now $H^{2g-1}(Alb(C_t\times C_t),\QQ)$ corresponds to the Hodge structure $H^1(C_t,\QQ)\oplus H^1(C_t,\QQ)$ and since $A_{0t}$ lies in $Alb(C_t\times C_t)$ we have that $H^{2d-1}(A_{0t},\QQ)=H_t$ is inside $H^1(C_t,\QQ)\oplus H^1(C_t,\QQ)$. Now we prove that $H_t$ is a $\pi_1(\PR^1\setminus \{0_1,\cdots,0_m\},t)$-equivariant module. For that we have to prove that for a generator $\gamma$ of the above group and for an $\alpha$ in $H_t$, $\gamma.\alpha$ is again in $H_t$. By the Picard Lefschetez formula we have
$$\gamma.\alpha=\alpha-\langle\alpha,\delta_{\gamma}\rangle\delta_{\gamma}$$
$\delta_{\gamma}$ is the vanishing cycle corresponding to $\gamma$.
Since $\pi_1(U,t')$ is a finite index subgroup in $\pi_1(\PR^1\setminus \{0_1,\cdots,0_m\},t)$, we get that there exists $m$ such that $\gamma^m$ belongs to $\pi_1(U,t')$. Then we get that
$$\gamma^m.\alpha-\alpha=m\langle\alpha,\delta_{\gamma}\rangle\delta_{\gamma}$$
is in $H_t$. Applying Picard-Lefschetz formula once again we get that $\gamma.\alpha$ is in $H_t$. This proves that $H_t$ is $\pi_1(\PR^1\setminus \{0_1,\cdots,0_m\},t)$-equivariant and hence we get that $H_t$ is either $\{0\}$ or $H^1(C_t,\QQ)$ or $H^1(C_t,\QQ)\times H^1(C_t,\QQ)$. Consequently we get that $A_{0t}$ is either $\{0\}$ or isogenous to $J(C_t)$ or $Alb(C_t\times C_t)$.
\end{proof}

\begin{theorem}
For a very general $t$, the abelian variety $A_{0t}$ is actually $\{0\}$ or isogenous to $J(C_t)$.
\end{theorem}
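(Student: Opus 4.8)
The plan is to eliminate, for a very general $t$, the last of the three possibilities in the previous theorem --- that $A_{0t}$ is the whole of $Alb(C_t\times C_t)$ --- leaving only $\{0\}$ or an isogeny factor of $J(C_t)$. The first point is that this possibility is equivalent to the vanishing of $j_{t*}$: since $A_0(C_t\times C_t)\cong Alb(C_t\times C_t)$ is a connected abelian variety and, by the Proposition above, $\ker(j_{t*})$ is a countable union of translates of $A_{0t}$, the equality $A_{0t}=Alb(C_t\times C_t)$ would force one such translate to be all of $A_0(C_t\times C_t)$, that is, $j_{t*}=0$. So it suffices to show $j_{t*}\ne 0$ for a very general $t$.

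For this I would compose with a projection. Fix $b_0\in C_t$, let $i_1\colon C_t\hookrightarrow C_t\times C_t$, $a\mapsto(a,b_0)$, be the inclusion of a horizontal slice, and let $p_1\colon S\times S\to S$ be the first projection. The composite $p_1\circ j_t\circ i_1\colon C_t\to S$ is exactly the closed embedding of the hyperplane section $C_t$ into $S$, so by functoriality of proper push-forward of zero-cycles
$$ j^{S}_{t*}\;=\;p_{1*}\circ j_{t*}\circ i_{1*}\colon\; A_0(C_t)=J(C_t)\longrightarrow A_0(S), $$
where $j^{S}_{t*}$ denotes the push-forward attached to a hyperplane section, the object studied in \cite{BG}. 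Thus $j_{t*}=0$ would force $j^{S}_{t*}=0$. But $S$ is a K3 surface, so $A_0(S)$ is not isomorphic to $Alb(S)$, and therefore, as recalled in the introduction, $\ker(j^{S}_{t*})$ is countable for a very general $t$; since $C_t$ is a smooth hyperplane section of a K3, adjunction gives $2g(C_t)-2=C_t^2>0$, so $J(C_t)$ has uncountably many $\CC$-points and a countable kernel is impossible for the zero map. Hence $j^{S}_{t*}\ne 0$, so $j_{t*}\ne 0$, for a very general $t$; intersecting this locus with the (very general) locus on which the trichotomy of the previous theorem holds, we conclude that $A_{0t}$ is $\{0\}$ or isogenous to $J(C_t)$ there.

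I do not expect a serious obstacle: the argument is essentially formal once one has the weak representability of $A_0(C_t\times C_t)$ (used above) and the non-vanishing of $j^{S}_{t*}$ for very general $t$ (the K3 statement of \cite{BG}); the only care needed is in the identity $j^{S}_{t*}=p_{1*}\circ j_{t*}\circ i_{1*}$ and in checking that a finite intersection of "very general" loci is again "very general". It is worth noting why one passes through $p_1$ at all: unlike the setting of Theorem~19 of \cite{BG}, the surfaces $C_t\times C_t$ do not sweep out $S\times S$ (their union has dimension only three), so the direct spread-and-decompose argument for $A_0(S\times S)$ is unavailable; projecting to the first factor transports the question back to hyperplane sections of $S$, where the results of \cite{BG} do apply.
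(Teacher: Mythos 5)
Your proposal is correct, and it ultimately rests on exactly the same input as the paper's own (one-line) proof, namely Theorem 19 of \cite{BG}: for a K3 surface the kernel of the hyperplane-section pushforward $J(C_t)\to A_0(S)$ is countable, hence nonzero is forced, for a very general member of the Lefschetz pencil. The paper merely cites that theorem; what you add is the explicit reduction making the citation work, namely the identity $j^{S}_{t*}=p_{1*}\circ j_{t*}\circ i_{1*}$ coming from the slice $a\mapsto(a,b_0)$ and the first projection, which shows that $A_{0t}=Alb(C_t\times C_t)$ (equivalently $j_{t*}=0$, given the paper's identification $A_0(C_t\times C_t)\cong Alb(C_t\times C_t)$) would kill the hyperplane-section pushforward and contradict the countability statement; intersecting with the very general locus of the preceding trichotomy then finishes the argument. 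Your closing remark is also apt: since the surfaces $C_t\times C_t$ sweep out only a threefold in $S\times S$, the spread-and-decompose mechanism of Theorem 19 cannot be run directly on $S\times S$, so some transfer back to hyperplane sections of $S$ --- such as your projection trick --- is genuinely needed, and the paper leaves this step implicit.
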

\begin{proof}
Follows from theorem 19 in \cite{BG}.
\end{proof}

\subsection{Rank one projective bundles on K3 surfaces and algebraic cycles}.
Consider a $K3 $ surface $S$ and a rank $2$ vector bundle of $S$. Let us embed $S$ inside some $\PR^N$. Consider a smooth and irreducible hyperplane section $C_t$ of $S$. Then we have the following commutative diagram.

$$
  \diagram
   E_t\ar[dd]_-{} \ar[rr]^-{} & & E\ar[dd]^-{} \\ \\
  C_t \ar[rr]^-{} & & S
  \enddiagram
  $$
where $E_t$ is the pullback of $E$ by the closed embedding of $C_t$ into $S$. Then this gives the following commutative diagram at the level of projective bundles.

$$
  \diagram
   \PR(E_t)\ar[dd]_-{} \ar[rr]^-{} & & \PR(E)\ar[dd]^-{} \\ \\
  C_t \ar[rr]^-{} & & S
  \enddiagram
  $$
Then we can ask what is the kernel of the natural push-forward from $A_1(\PR(E_t))$ to $A_1(\PR(E))$. By the projective bundle formula we have the following that
$$A_1(\PR(E_t))\cong A_0(C_t),\quad A_1(\PR(E))\cong A_0(S)$$
then we have the following commutative diagram (it follows from the projective bundle formula that such a diagram is indeed commutative)
$$
\diagram
   A_1(\PR(E_t))\ar[dd]_-{} \ar[rr]^-{} & & A_1(\PR(E))\ar[dd]^-{} \\ \\
  A_0(C_t) \ar[rr]^-{} & & A_0(S)
  \enddiagram
$$
Since the vertical arrows are isomorphisms we get that the kernel of
$$A_1(\PR(E_t))\to A_1(\PR(E))\;,$$
is nothing but the kernel of
$$A_0(C_t)\to A_0(S)\;.$$
Now considering a Lefschetz pencil on $S$ we can prove by theorem 19 in \cite{BG} that for a very general $t$ in the Lefschetz pencil, the kernel of the push-forward from $A_1(\PR(E_t))$ to $A_1(\PR(E))$ is countable.

\subsection{Branched covers of K3 surfaces}
Let $S$ be a $K3$ surface and let $\wt{S}$ be a branched cover of $S$. Let us embed $S$ into some $\PR^N$. Then consider the smooth hyperplane sections of $S$ inside $\PR^N$. By Bertini's theorem a general hyperplane section $C_t$ of $S$ corresponds to a smooth, irreducible curve $\wt{C_t}$ inside $\wt{S}$. Now arguing as in theorem 6 in \cite{BG} we can prove that the kernel of $j_{t*}$ from $J(\wt{C_t})$ to $A_0(S)$ is a countable union of translates of an abelian variety $A_t$ of $J(\wt{C_t})$. Now take a Lefschetz pencil $D$ on $S$. We prove that for a general $t$ in $D$, $A_t$ is either $\{0\}$ or $J(\wt{C_t})$. Let us blow up the ramification locus of $C_t$ and we can assume that $\wt{C_t}\to C_t$ is actually a covering. Now consider $A_t$ inside $J(\wt{C_t})$. Let us consider them over $\overline{\CC(x)}$ via the isomorphism of $\overline{\CC(x)}$ with $\CC$. Let $L$ be the minimal field of definition of $A_t,J(\wt{C_t})$ and $L$ is a finite extension of $\CC(x)$. So let $C$ be a curve such that it maps finitely onto $\PR^1$ and the function field of $C$ is $L$. Then spread $A_t,J(\wt{C_t})$ over some Zariski open $U$ in $C$. Then throwing out some more points of $U$, we can assume that we have two fibrations $\bcA\to U$ and $\bcJ\to U$, whose generic fibers are $A_t$ and $J(\wt{C_t})$. This gives us that $\pi_1(U,t')$ acts on $H_t=H^{2d-1}(A_t,\QQ)$ and $H^{2g'-1}(J(\wt{C_t}),\QQ)$, here $d,g'$ are dimensions of $A_t,J(\wt{C_t})$. The later vector space is nothing but  $H^1(\wt{C_t},\QQ)$ and we have $H^1(C_t,\QQ)$ is embedded into it. So consider $H_t\cap H^1(C_t,\QQ)=H_t'$. Now $\pi_1(U,t')$ is a finite index subgroup of $\pi_1(\PR^1\setminus\{0_1,\cdots,0_m\},t)$.

Now the standard Picard-Lefschetz formula argument and the fact that $\pi_1(U,t')$ is a finite index subgroup of $\pi_1(\PR^1\setminus\{0_1,\cdots,0_m\},t)$ we get that $H'_t$ is a $\pi_1(\PR^1\setminus\{0_1,\cdots,0_m\},t)$ module embedded into $H^1(C_t,\QQ)$. Since the action of $\pi_1(\PR^1\setminus\{0_1,\cdots,0_m\},t)$ on $H^1(C_t,\QQ)$ is irreducible by theorem 3.27 in \cite{Voisin} we get that $H'_t$ is either $\{0\}$ or all of $H^1(C_t,\QQ)$. So it would mean that there is an ablian subvariety $B_t$ of $A_t$ which is either $\{0\}$ or isogenous to $J(C_t)$.
\begin{theorem}
The kernel of $j_{t*}$ is a countable union of a translate of an abelian variety $A_t$ inside $J(\wt{C_t})$. Further this abelian variety $A_t$ contains an abelian variety $B_t$, which is either zero or isogenous to $J(C_t)$.
\end{theorem}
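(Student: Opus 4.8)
The plan is to assemble the ingredients already developed in this subsection, the first assertion being a direct citation and the second being the Picard--Lefschetz monodromy argument used in the proof of Theorem~\ref{theorem 1}, applied inside $H^1(\wt{C_t},\QQ)$.

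\textbf{Structure of the kernel.} For the first claim I would run the argument of \cite{BG}, Proposition~6, verbatim, now for the composite morphism $\wt{C_t}\to C_t\hookrightarrow S$: writing $A_0(\wt{C_t})\cong J(\wt{C_t})$ by Abel--Jacobi, one exhibits $\ker(j_{t*})$ as a countable union of images of abelian subvarieties of $J(\wt{C_t})$ indexed by a countable family of closed subvarieties of a Chow variety, and then invokes the rigidity statement that over the uncountable field $\CC$ a projective variety is not a countable union of proper closed subsets to conclude that $\ker(j_{t*})=\bigcup_{i\in\NN}(x_i+A_t)$ for a single abelian subvariety $A_t\subseteq J(\wt{C_t})$. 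Nothing beyond \cite{BG} is needed here.

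\textbf{Spreading out.} For the second claim, fix a Lefschetz pencil $D\cong\PR^1$ on $S$ with discriminant $\{0_1,\dots,0_m\}$; for general $t$ both $C_t$ and $\wt{C_t}$ are smooth and irreducible, and after blowing up the ramification locus I may assume $\wt{C_t}\to C_t$ is unramified, so that pullback gives an inclusion of rational weight-one Hodge structures $H^1(C_t,\QQ)\hookrightarrow H^1(\wt{C_t},\QQ)$. Using a non-canonical isomorphism $\overline{\CC(x)}\cong\CC$ I view $A_t\subseteq J(\wt{C_t})$ as defined over $\overline{\CC(x)}$, take their (finite) field of definition $L$ over $\CC(x)$, choose a curve $C$ with $\CC(C)=L$ mapping finitely to $D$, and spread the inclusion $A_t\subseteq J(\wt{C_t})$ to abelian schemes $\bcA\subseteq\bcJ$ over a Zariski open $U\subseteq C$. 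After removing finitely many more points the structure maps $\bcA\to U$, $\bcJ\to U$ are proper and smooth, hence (Ehresmann) topological fibre bundles, so $\pi_1(U,t')$ acts on $H_t:=H^{2d-1}(A_t,\QQ)$ and on $H^{2g'-1}(J(\wt{C_t}),\QQ)\cong H^1(\wt{C_t},\QQ)$ (with $d=\dim A_t$, $g'=\dim J(\wt{C_t})$), compatibly with the inclusion $H_t\hookrightarrow H^1(\wt{C_t},\QQ)$; moreover $\pi_1(U,t')$ has finite index in $\pi_1(D\setminus\{0_1,\dots,0_m\},t)$.

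\textbf{Picard--Lefschetz and irreducibility.} Put $H'_t:=H_t\cap H^1(C_t,\QQ)\subseteq H^1(\wt{C_t},\QQ)$, a $\pi_1(U,t')$-submodule of $H^1(C_t,\QQ)$. The finite-index bootstrap already used for Theorem~\ref{theorem 1} applies: if $\gamma$ is a vanishing-cycle generator and $\gamma^{m}\in\pi_1(U,t')$, then from $\gamma^{m}\cdot\alpha-\alpha=m\langle\alpha,\delta_\gamma\rangle\delta_\gamma\in H'_t$ and the Picard--Lefschetz formula $\gamma\cdot\alpha=\alpha-\langle\alpha,\delta_\gamma\rangle\delta_\gamma$ one deduces $\gamma\cdot\alpha\in H'_t$, so $H'_t$ is a $\pi_1(D\setminus\{0_1,\dots,0_m\},t)$-submodule of $H^1(C_t,\QQ)$. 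By \cite{Voisin}, Theorem~3.27, this monodromy representation is irreducible, hence $H'_t$ is $\{0\}$ or all of $H^1(C_t,\QQ)$. Since $H'_t$ is a rational sub-Hodge structure of weight one of $H^1(J(\wt{C_t}),\QQ)$ lying in $H_t$, it cuts out (up to isogeny) an abelian subvariety $B_t\subseteq A_t$ which is either $0$ or satisfies $H^1(B_t,\QQ)\cong H^1(C_t,\QQ)$, i.e. is isogenous to $J(C_t)$. This gives the theorem; as in \cite{BG}, Theorem~19, the non-weak-representability of $A_0(S)$ then additionally forces $B_t=0$ for a very general $t$, though this refinement is not part of the statement here.

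\textbf{Main obstacle.} The one genuinely delicate point is in the last step: one must be sure that $H^1(C_t,\QQ)\subseteq H^1(\wt{C_t},\QQ)$ is a sub-Hodge structure on which $\pi_1(D\setminus\{0_1,\dots,0_m\},t)$ acts through the vanishing cycles of the given Lefschetz pencil on $S$ (so that \cite{Voisin}, Theorem~3.27, indeed applies and the finite-index subgroup $\pi_1(U,t')$ carries enough transvections for the bootstrap), and that $H_t\cap H^1(C_t,\QQ)$ is still a sub-Hodge structure split off by an abelian subvariety. These are precisely what the blow-up of the ramification locus (making $\wt{C_t}\to C_t$ unramified, hence $H^1$-injective and strictly compatible with Hodge structures) and the compatibility of the spreads $\bcA\subseteq\bcJ$ with the inclusion of abelian schemes are arranged to guarantee.
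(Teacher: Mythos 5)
Your proposal follows essentially the same route as the paper: the kernel structure is obtained by citing the argument of \cite{BG} (Proposition/Theorem 6), and the second assertion is proved exactly as in the text by blowing up the ramification locus, spreading $A_t\subseteq J(\wt{C_t})$ over a curve mapping finitely to the pencil, using Ehresmann to get a $\pi_1(U,t')$-action, running the finite-index Picard--Lefschetz bootstrap on $H'_t=H_t\cap H^1(C_t,\QQ)$, and invoking the irreducibility of the monodromy representation (\cite{Voisin}, Theorem 3.27) to conclude $B_t$ is zero or isogenous to $J(C_t)$. The ``delicate point'' you flag (compatibility of the pencil monodromy with the inclusion $H^1(C_t,\QQ)\hookrightarrow H^1(\wt{C_t},\QQ)$) is likewise left implicit in the paper, so your treatment is at least as careful as the original.
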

Now if $B_t$ is isomorphic to $J(C_t)$, then we have $A_t$ maps onto $J(C_t)$. That would mean that $J(C_t)$ maps to zero under the push-forward from $J(C_t)$ to $A_0(S)$. But for a very general $t$, the kernel of the push-forward from $J(C_t)$ to $A_0(S)$ is countable. Therefore either the very general member $C_t$ of the Lefschetz pencil we started with is rational or $B_t$ is $0$.

\end{document}